\newtheorem{thm}{Theorem}[section]
\begin{document}

\nocite{*}

\title{Geometric deep learning approach to knot theory}

\author{Lennart Jaretzki\\ 
lennart.jaretzki@gmail.com \\
Leipzig Germany \\
}

\maketitle

\begin{abstract}
 
In this paper, we introduce a novel way to use geometric deep learning for 	knot data by constructing a functor that takes knots to graphs and using graph neural networks. We will attempt to predict several knot invariants with this approach. This approach demonstrates high generalization capabilities.

\end{abstract}

\section{Introduction}
Recent papers have researched the application of machine learning and neural networks to knot theory. One common approach to learning knots is to use their braid representation as input instead. This has the downside that extra crossings have to be added. Also, it makes the data less dense, and the neural network has to abstract the underlying knot. The neural networks used are often dense models that operate on a fixed-sized input, sequence-to-sequence models, transformers, and other natural language processing approaches \cite{8, untangleBraid, learningToUnknot}. There was some research conducted on using a more geometric approach by using rectangular diagrams, but they used LSTMs instead of a geometric deep learning approach, and in the data representation, arbitrary choices had to be made \cite{rectangularKnotLearning}. It seems much more intuitive to use methods from geometric deep learning to learn knot data \cite{GDL}. We address these issues by proposing a method to use graph neural networks to learn knot data. For this purpose, we construct a functor that takes knots to graphs, which are learned by graph neural networks. We will attempt to predict both geometric and combinatorial invariants. We will conduct experiments using the graph transformer operator. The code for this project can be found on \href{https://github.com/LennartJaretzki/geometricDeepLearningKnotData}{github}.

\section{Graph construction}
Using graph neural networks for knot learning requires a way to represent a given knot as a graph. One method to do this would be by using black graphs, but they suffer from the problem that the graph is often a self-connected graph, i.e. it contains loops. Another problem is that a large number of black graphs are only pseudographs. This makes it hard to apply graph neural networks to them. Our proposed functor is similar and takes knots to graphs by taking the faces of the graph to the vertices of the graph and taking edges between crossings to perpendicular graph edges. This induces taking the crossings of the knot to the faces of the resulting graph. By definition, the resulting graph is planar. This brings up the question if there is a unique way to reconstruct the knot from a given knot. Such that a unique reconstruction is possible, the edges of the resulting graph need to have two edge attributes. The first attribute is a measure of distance that is necessary to preserve the general structure of the knot, and the second edge attribute is a boolean that indicates if a given edge between crossings is altering between "over" and "under". Since only the alteration is given, it is not possible to differentiate chirality and handedness. Note that this procedure generalizes to links. It is probably also possible to generalize the procedure to virtual knots.

\begin{figure}[ht]
\begin{center}
\includegraphics[width=2.2in]{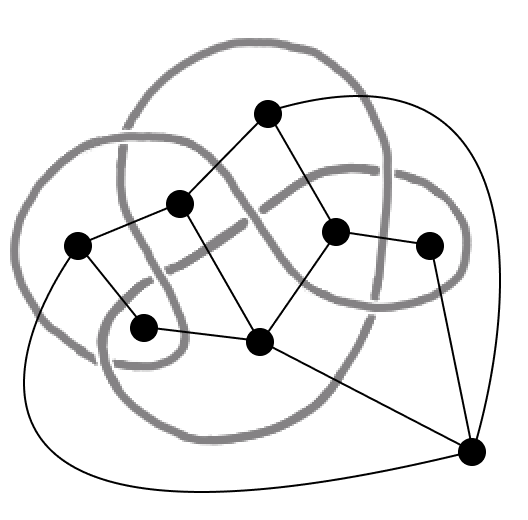}
\end{center}
\caption{The described Functor at the example of the $6\_3$ with the crossing in the middle flipped.\label{exampleFunctor}}
\end{figure}

\subsection{Rigorous definition}
Given a knot, first all edges of the knot diagram need to be enumerated, i. e. labeled with increasing numbers (increasing while we go along the paths that connect two crossings). Next, the alteration of each edge needs to be determined. Now that we have gathered this information, we can transform our knot into a graph.

\begin{equation}
V:=\lbrace F\vert \partial F = \bigcup\limits_{i\in A}e_i\text{ with }\forall i\in A: e_i \text{ is edge in the knot diagram} , F\text{ is connected region}\rbrace\,.
\label{graphVertecies}
\end{equation}

\begin{equation}
E:=\lbrace (F_0, F_1)\vert F_0, F_1 \in V, F_0\cap F_1 \neq \emptyset \rbrace\,.
\label{graphEdges}
\end{equation}
Note that this has a similar problem to black graphs because two faces may have a common edge in two or more distinct positions. This is relatively rare and did not happen once in the dataset. It does occur when the knot gets twisted or two knots get added.

\begin{equation}
G = (V, E).
\label{graph}
\end{equation}
Let $e\in E$:

\begin{equation}
l(e):=\text{label of the edge}
\label{labelMeasure}
\end{equation}
Note that by definition, if and only if $a, b \in E$, and $a$ and $b$ are on the opposite side of the same crossing, then:

\begin{equation}
l(a)\equiv l(b) \pm 1 \mod{2\cdot \text{\#Crossings(K)}}
\label{labelCrossingDefinition}
\end{equation}

\subsection{Restore knot}
To restore the knot, it is necessary to find a planar embedding of the knot graph.
\noindent
\begin{equation}
p: V \to \mathbb{R}^2
\label{Embedding}
\end{equation}
Such that $p$ is planar, i. e. there are no two graph edges that cross each other. To be able to reconstruct $K$ uniquely, we need the additional condition that, given an arbitrary graph face, this face is composed of exactly four edges. It is also necessary that opposing edges fulfill ~``Eq.~(\ref{labelCrossingDefinition})''. To reconstruct the knot diagram, every face gets mapped to a crossing. The outgoing edges of the constructed knot crossing get connected through the graph edges they are affiliated with, such that they connect with the crossings created by the adjoint graph faces. The last step is to assign an arbitrary crossing where the in-going path goes "under" After that, you can derive the state of the other crossings via the second edge attribute.

\begin{thm}
Two embeddings $p_0$, $p_1$ of $G$ restore ambient isotopic knots.
\end{thm}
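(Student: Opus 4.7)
The plan is to reduce the statement to two classical facts: (i) a planar graph with a prescribed combinatorial map has an essentially unique embedding in $S^{2}$, and (ii) an orientation-preserving self-homeomorphism of $\mathbb{R}^{2}$ extends to an ambient isotopy of $\mathbb{R}^{3}$. The actual work is to show that the edge-label data pin down the combinatorial map of any valid embedding.

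First I would establish combinatorial rigidity. Any valid embedding has every face bounded by four edges with opposite edges satisfying (\ref{labelCrossingDefinition}). Conversely, I would argue that the cyclic order of graph-edges around a fixed vertex $F\in V$ is already determined by the labels: walking along $\partial F$ in the knot diagram meets consecutive arcs whose labels differ by $\pm 1$ modulo $2\cdot\#\mathrm{Crossings}(K)$, and each such arc corresponds to exactly one graph-edge incident to $F$, so the rotation system at $F$ can be read off from $l$ alone. Hence $p_{0}$ and $p_{1}$ realize the same combinatorial map on $S^{2}$, possibly up to a global orientation reversal.

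Given the common combinatorial map, any two planar realizations differ by a self-homeomorphism $\varphi\colon S^{2}\to S^{2}$. After composing with a Möbius transformation taking the outer face of $p_{1}$ to that of $p_{0}$, the map $\varphi$ restricts to a self-homeomorphism of $\mathbb{R}^{2}$, which is isotopic to the identity. Extending this isotopy trivially in the $z$-direction gives an ambient isotopy of $\mathbb{R}^{3}$ that carries the diagram reconstructed from $p_{0}$ to the one reconstructed from $p_{1}$; both the alternation edge-attribute and the single initial over/under choice are transported by $\varphi$, so the two knots are ambient isotopic. The orientation-reversing case would swap chirality, which is consistent with the remark in the paper that the graph data do not distinguish handedness.

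The hard part is the combinatorial rigidity step: the graph $G$ need not be $3$-connected, so Whitney's uniqueness theorem does not apply directly, and one must rule out Whitney flips by hand. The cleanest route I see is to show that any such flip either breaks the 4-cycle face condition at some crossing, or else yields a face whose opposite edges fail the label congruence (\ref{labelCrossingDefinition}); the labels thus supply the additional rigidity that substitutes for $3$-connectedness. A secondary subtlety is the arbitrary choice of initial over/under in the reconstruction procedure; I would fix this choice consistently across $p_{0}$ and $p_{1}$ by using the same graph-edge, so that $\varphi$ transports one choice to the other.
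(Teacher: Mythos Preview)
Your approach is genuinely different from the paper's. The paper does not invoke combinatorial maps or Whitney's theorem at all. Instead it lifts each planar embedding $p_i$ to $\mathbb{R}^3$ by setting $w_{p_i}(v)=(p_i(v),d(v_0,v))$, where $d$ is graph-distance from a fixed base vertex, and then linearly interpolates: $u_t=(1-t)\,w_{p_0}+t\,w_{p_1}$. At each time $t$ the knot is rebuilt by placing crossings at the centres of the lifted faces satisfying (\ref{labelCrossingDefinition}) and routing strands through edge-midpoints; continuity in $t$ is then asserted to give the ambient isotopy. This is shorter and sidesteps the $3$-connectedness issue entirely, but it is correspondingly sketchier---the paper's own footnotes concede that an auxiliary homeomorphism may be needed to keep the interpolation injective, and that faces may flip along the way. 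Your route through rotation systems and self-homeomorphisms of the plane is the standard topological framework and, once the rigidity step is secured, yields a cleaner argument; the price is exactly the Whitney-flip analysis you flag as the hard part. One small correction to your rigidity sketch: consecutive arcs along $\partial F$ meet at a crossing whose four incident arc-labels are $i,i{+}1,j,j{+}1$, so adjacent boundary arcs need not differ by $\pm 1$; the rotation system is pinned down not by consecutive-label increments but, as you correctly say later, by the requirement that every face of a valid embedding be a $4$-cycle whose opposite edges satisfy (\ref{labelCrossingDefinition}).
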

\begin{proof}
Let $v_0 \in V$ be an arbitrary vertex. Next, $p_0(G)$ and $p_1(G)$ need to be embedded in $\mathbb{R}^3$. Define $w_p(v):=(p(v), d(v_0,v))$ where $d$ is a distance measure (the number of edges of the shortest path connecting $v_0$ and $v$). We can now transform $w_{p_0}(G)$ to $w_{p_1}(G)$ by shifting the vertecies. Define $u_t(w_{p}(g)):=(1-t)\cdot w_{p_0}(g) +t\cdot w_{p_1}(g)$ with $0 \leq t \leq 1$. We can find a corresponding knot embedded in $\mathbb{R}^3$ for each $u_t$ by embedding the edges of $G$ as straight lines and then embedding the crossings of the knot in the center of the faces that fulfill ~``Eq.~(\ref{labelCrossingDefinition})'' while letting the knot edges intersect with the midpoint of the embedded graph edges. This procedure is continuous along $t$ \footnote{You may have to find a homeomorphism of the embedding such that injectivity is preserved throughout the entire process.}. Since we can also lift the two graph embeddings continuously. Applying $w_{p_1}^{-1} \circ u_1 \circ w_{p_0}$ induces ambient isotopy \footnote{Note that the face may be flipped when applying $u$ but since we only have information about the crossings in relation to each other, this makes no difference.}.
\end{proof}

\section{Dataset}
The foundation of the dataset consists of 2977 knots from the Knotinfo Database \cite{knotinfo}. These consist of all knots up to 12 crossings.
\subsection{Data augmentation}
Since the purpose is to predict invariance, we can easily extend the dataset by applying random Reidemeister moves to the given knots. The exact algorithm used for shuffling the knots is described in \ref{shuffle}. This returns a new randomly shuffled knot, which is ambient isotopic to the original knot and is non-simplifiable, i. e. can't be solved by just undoing the first and second reidemeister moves. For every knot, there are 89 shuffled versions of the knot, such that the entire dataset of shuffled knots consists of 264,953 knots. The mean number of crossings of these knots is 34.56, with a standard derivation of 11.38. Also, a dataset with 2977 knots that have at least 70 crossings was generated. Later, this dataset will be used to test how well the neural networks can generalize to bigger knots. The mean number of knot crossings in this dataset is 91.3, with a standard deviation of 14.91.
\subsection{Validation datasets}
One validation dataset is generated by a test train split, with $20\%$ of the data being used for validation. We will also attempt to measure the generalization capabilities by training only on knots that have 11 or fewer crossings when solved and testing on knots that have 12 crossings when solved. As well as testing on the knots that have at least 70 crossings.
\subsection{Data preperation}
We need to address two issues before we can use our data as input for neural networks. First of all, in the distance measure, there was an arbitrary edge chosen to enumerate the edges. To work around the problem, we instead calculate the distance between the adjacent knot edges. Because neural networks work better with smooth values that lie in a predefined interval, we also apply an activation function to the distances.

\begin{figure}[ht]
\begin{center}
\includegraphics[width=2.2in]{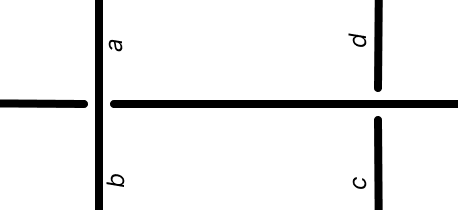}
\end{center}
\caption{Two crossings connected by one edge. The labels of the enumerated edges are $a,b,c,d$. {\it T}.\label{fig1}}
\end{figure}

The edge attribute that can be used by the neural network is given by

\noindent
\begin{equation}
1-2^{-\frac{d}{d_s}}.
\label{graphEdges}
\end{equation}
with $d \equiv \frac{\lvert(a+b-(c+d))\lvert}{2} \mod{\#Crossings(K)}$ being the distance, i. e. the minimal number of edges you need to walk to get from one to the other. If $d$ is higher than $d_s$ the edge attribute will be higher than $\frac{1}{2}$ and gets squished, while values lower than are more distinguishable and can therefore be processed easier. This may be advantageous because edges that are closer together are more intertwined and are easier to process. $d_s=15$ was arbitrarily chosen. Also, to make the edge attribute altering usable for the network, $altering$ and $not altering$ get mapped to $1$ and $-1$ respectively. 

\section{Experiments}
\subsection{Models}
Since the information about the knot is given by the edge attributes, it is necessary to choose a graph neural network architecture that can process edge attributes. One popular approach is graph convolution networks, which work by aggregating information from neighboring nodes and edges in a graph using a learned convolution operation to produce a new representation for each node in the graph \cite{PNA}. The transformer is a powerful architecture that utilizes attention mechanisms and has been adeptly used in the graph transformer \cite{graphTrans, GAT, vaswani2017attention}. We observed that the results were best when using the graph transformer. We apply the layer, which we batch normalize, and apply the $tanh$ activation function on this operation, which is then stacked four times. Since we want to make graph-level predictions, we then use global pooling, which means that we calculate the $mean$, $max$, and $min$ of all node features of the graph. These are then concatenated with 10 additional features of the given knot \footnote{Similar to \cite{8} we use the following features: alternating, fibered, positive braid closure, small or large, crossing number, signature, arc index, determinant, and Rasmussen invariant }. They are then again normalized, which increases results when generalizing to larger knots. These are the same as in the given tensor, which is then the input for a 4-layer multi-layer perceptron with 25 hidden neurons per layer. And one neuron that is predicting the feature. Then the mean absolute error will be calculated, and the model parameters will be updated using backpropergation and the Adam optimizer.

\subsection{Training}
The model will be fitted to the data over multiple epochs with a learning rate of $0.001$. Comparison of different graph neural networks fitting different knot invariants. The models were trained, and implemented using pytorch, and pytorch geometric \cite{pytorchgeometric}.

\subsection{Results}
The models have been trained over 200 epoches, and the one with the highest accuracy has been picked. In the case of volume, the mean distance to the correct result is displayed.
\begin{center}
\begin{tabular}{|c||c|c|c|c|}
\hline
Feature & used Additional & Validation Dataset & Large Knot Dataset & $\le$ 11 Results \footnotemark  \\
\hline
Q-Positivity & Yes & 92.38\% & 91.57\% & 91.13\% \\
Rasmussen-S & Yes & 95.63 & 95.36\% & - \\
Volume & Yes & 0.5498 & 0.7905 & 0.8485 \\
Volume & No & 2.0442 & 2.6864 & - \\

Ozsvath-Szabo $\tau$ & Yes & 99.78\% & 98.79\% & 99.92\% \\

\hline
\end{tabular}
\end{center}

\footnotetext{When the model is trained only on knots that have crossings that are less than or equal to 11 when simplified and then tested on knots that have 12 crossings when simplified.}
We can observe that even though the accuracy is not that high, the networks demonstrate high generalization both to larger knots and to knots that have more crossings when simplified. Also note that it works better on geometric/hyperbolic than on numerical/combinatorical \cite{hyperbolic, combi}.

\section{Similar work}
While conducting this research, a similar approach has been published that utilizes geometric deep learning to predict if two given 3-manifolds are homeomorphic \cite{plumbing}. This approach uses plumbing graphs as input. They also use techniques from reinforcement learning and graph attention networks. 

\appendix

\section{Shuffle algorithm}

Given a knot and a complexity $c$ the here used shuffle algorithm starts by applying $2\cdot c$ times random type 1 and type 2 reidemeister moves with probabilities of $20\%$ and $80\%$ respectively. After this, it applies $5$ random type 1 and 2 reidemeister moves with the same probabilities, after which $\lfloor \frac{c}{20} \rfloor$ random type 3 reidemeister moves are applied. This process is repeated $c$ times. After this, all excess type 1 and 2 reidemeister moves are reversed. \label{shuffle} Increasing $c$ increases the probability of getting bigger and more complex knots. This algorithm was implemented using \cite{snappy}.

%%\bibliographystyle{plain}

%%\bibliography{paper}

\end{document}